\newtheorem{lem}{Lemma}
\newtheorem{deff}{Definition}
\newtheorem{mytheor}{Theorem}
\newtheorem{theor}{Theorem}
\newtheorem{cor}{Corollary}
\newcommand{\N}{\mathbb{N}}
\newcommand{\sA}{\mathscr{A}}
\DeclareMathOperator{\stab}{\mathbf{stab}}
\DeclareMathOperator{\pr}{pr}
\begin{document}

\author{Andrei Alpeev  \footnote{DMA, École normale supérieure, Université PSL, CNRS, 75005 Paris, France,\\alpeevandrey@gmail.com}}
\title{Non-C*-simple groups admit non-free actions on their Poisson boundaries}

\maketitle
\begin{abstract}
It is a classical result of Kaimanovich and Vershik and independently of Rosenblatt that a non-amenable group admits a non-degenerate symmetric measure such that the Poisson boundary is trivial. Most if not all examples to date of non-free actions of countable groups on their Poisson boundaries had the stabilizers sitting inside the amenable radical. We show that every countable non-C*-simple group admits a symmetric measure of full support with non-trivial stabilizers. For a class of non-C*-simple groups with trivial amenable radical, which is non-empty as was shown by le Boudec, this gives a wealth of examples with non-normal stabilizers.
\end{abstract}
I write this note to demonstrate a serie of mildly peculiar examples in the realm of the Poisson boundaries. One motivation for the current work is the question, whether it is possible for the action of the group on its Poisson boundary (corresponding to some non-degenerate measure on the group) to have different stabilizers for different points. Another, is to make a connection between properties of the Posiison bounary and the Furstenberg boundary. Two properties of groups related to their C*-algebras turned out to be amenable to analysis by the way of considering their Furstenberg boundary: that of C*-simplicity and of the unique trace property. Initially, this connection was drawn for C*-simplicity by Kalantar and Kennedy in \cite{KaKe17}. 
It is said that a group has {\em unique trace property} if there is a unique trace (the canonical trace) on its reduced C*-algebra. Breuillard, Kallantar, Kennedy and Ozawa in \cite{BKKO17} proved that 
\begin{theor}
A group has unique trace property iff it has no non-trivial amenable subgroups. 
\end{theor}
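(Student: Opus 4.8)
The plan is to prove both implications, reading ``no non-trivial amenable subgroups'' in the intended sense that the amenable radical $\mathrm{Rad}(G)$ (the largest amenable normal subgroup) is trivial, and writing $\tau$ for the canonical trace on $C^*_r(G)$, determined by $\tau(\lambda_g)=\delta_{g,e}$. The unique trace property is precisely the assertion that $\tau$ is the only tracial state.

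For the easy implication I would argue by contraposition. Suppose $N:=\mathrm{Rad}(G)$ is non-trivial. Since $N$ is amenable, the trivial representation of $N$ is weakly contained in its left regular representation, and inducing up to $G$ yields the weak containment $\lambda_{G/N}\prec\lambda_G$ of the quasi-regular representation in the regular one. Hence the algebra quotient $\mathbb{C}[G]\to\mathbb{C}[G/N]$ extends to a surjective $*$-homomorphism $q\colon C^*_r(G)\twoheadrightarrow C^*_r(G/N)$ with $q(\lambda_g)=\lambda^{G/N}_{gN}$. Pulling back the canonical trace of $G/N$ then gives a tracial state $\tau_{G/N}\circ q$ on $C^*_r(G)$ whose value on $\lambda_g$ is $1$ for every $g\in N$. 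As $N\neq\{e\}$, this is not $\tau$, so the trace is not unique.

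The substantial direction is that a trivial amenable radical forces uniqueness, and here I would route everything through the Furstenberg boundary $\partial_{F}G$. First I would invoke the Kalantar--Kennedy realization of $C(\partial_{F}G)$ as the $G$-injective envelope of $\mathbb{C}$: by $G$-injectivity, the unital inclusion $\mathbb{C}\hookrightarrow C^*_r(G)$ (with $G$ acting by conjugation) extends to a $G$-equivariant unital completely positive map $\Phi\colon C^*_r(G)\to C(\partial_{F}G)$. Two structural inputs drive the rest: (i) every point stabilizer $G_x$ of the boundary action is amenable, so the kernel $\bigcap_{x}G_x$ is an amenable normal subgroup and hence equals $\mathrm{Rad}(G)$, which is trivial by hypothesis---the action is therefore faithful; and (ii) minimality together with strong proximality allow one to show that $\Phi(\lambda_g)$ is supported on the fixed-point set $\mathrm{Fix}(g)=\{x:gx=x\}$.

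To finish I would take an arbitrary tracial state $\varphi$ and aim to show $\varphi(\lambda_g)=0$ for all $g\neq e$, exploiting that $\varphi$ is conjugation-invariant (a class function) and comparing it, through $\Phi$, with the dynamics on $\mathrm{Fix}(g)$. The hard part---and precisely why this is more delicate than C*-simplicity---is that only faithfulness, not topological freeness, is available: the sets $\mathrm{Fix}(g)$ may have non-empty interior (indeed le Boudec's examples show one cannot hope for freeness), so Powers-type norm averaging fails and one cannot simply invoke simplicity of $C(\partial_{F}G)\rtimes_r G$. The crux is therefore a purely dynamical argument, using strong proximality to average the fixed-set structure, showing that any $g$ with $\varphi(\lambda_g)\neq 0$ would generate, together with its conjugates, a non-trivial amenable normal subgroup; under $\mathrm{Rad}(G)=\{e\}$ this is impossible, so $\varphi=\tau$.
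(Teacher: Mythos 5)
First, a point of reference: the paper offers no proof of this statement --- it is imported verbatim from [BKKO17] --- so there is no internal argument to compare yours against; I am judging your proposal against the known proof. Your reading of the hypothesis as ``trivial amenable radical'' is the right one (as literally printed, ``no non-trivial amenable subgroups'' would force $G$ to be trivial, since every non-trivial group contains a non-trivial cyclic, hence amenable, subgroup; the intended statement concerns \emph{normal} amenable subgroups). Your easy direction is complete and correct: amenability of $N=\mathrm{Rad}(G)$ gives $\lambda_{G/N}\prec\lambda_G$, hence the surjection $C^*_r(G)\to C^*_r(G/N)$ and a pulled-back trace equal to $1$ on $N$.

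The hard direction, however, has a genuine gap at exactly the point you yourself label ``the crux'': you assert, but do not prove, that $\varphi(\lambda_g)\neq 0$ forces $g$ and its conjugates into a non-trivial amenable normal subgroup, and the phrase ``a purely dynamical argument, using strong proximality to average the fixed-set structure'' is a placeholder, not an argument. What actually closes this gap in [BKKO17] is the following chain, none of which appears in your text: extend the tracial state $\varphi$ to a state $\tilde\varphi$ on the reduced crossed product $C(\partial_F G)\rtimes_r G$; use strong proximality to replace $\tilde\varphi$ by a weak-$*$ limit of averages of the states $a\mapsto\tilde\varphi(\lambda_h^*a\lambda_h)$ whose restriction to $C(\partial_F G)$ is a point evaluation $\delta_x$ --- the decisive use of traciality is that this averaging does not change $\tilde\varphi$ on $C^*_r(G)$; observe that $C(\partial_F G)$ then lies in the multiplicative domain of $\tilde\varphi$, so $\tilde\varphi(\lambda_g)\bigl(f(x)-f(g^{-1}x)\bigr)=0$ for all $f$, whence $\tilde\varphi(\lambda_g)=0$ whenever $gx\neq x$; finally, conjugation-invariance of $\varphi$ together with minimality (density of the orbit of $x$) upgrades this to $\varphi(\lambda_g)=0$ for every $g$ with $\mathrm{Fix}(g)\neq\partial_F G$, i.e.\ for every $g$ outside the kernel of the boundary action, which is the amenable radical and is trivial by hypothesis. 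Your items (i) and (ii) are correct inputs (and (i) is exactly why the kernel is amenable), but without the extension-to-the-crossed-product and point-mass averaging step the proof does not go through; note also that the support statement in (ii) is really a multiplicative-domain/rigidity fact about extensions of the identity of $C(\partial_F G)$ to the crossed product, not a consequence of minimality and strong proximality applied to your map $\Phi$ on $C^*_r(G)$ alone.
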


A group is called {\em C*-simple} if its reduced C*-algebra does not have non-trivial factors.
A group $G$ is not {\em C*-simple} iff it has an amenable subgroup $H$ and a finite subset $S$ not containing the group identity such that $g^{-1}Hg \cap S \neq \varnothing$ for every $g \in G$, as was shown by Kennedy \cite{Ke20}. 

\begin{deff}
A finite subset $S$ of a group $G$ is called {\em amenably-visible} if for some amenable subgroup $H$ of $G$ we have that $S \cap H^\gamma \neq \varnothing$ for all $\gamma \in G$ (equivalently $S^\gamma \cap H \neq \varnothing$). 
\end{deff}
There are some easy examples. First, any subset that contains the group identity is trivially amenably-visible. Any subset that has a non-empty intersection with a normal amenable subgroup is amenably visible.

The main theorem of \cite{Ke20} could be trivially reformulated in the following way:

\begin{theor}\label{thm: inner criterion}
A group is not C*-simple iff it has a finite amenably-visible subset not containing the group identity.
\end{theor}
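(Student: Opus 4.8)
The plan is to treat this statement as a pure reformulation of Kennedy's criterion quoted above, so that essentially all of the mathematical content already resides in that result and the remaining work is just reconciling the two phrasings and fixing conventions. First I would recall Kennedy's criterion in the precise form stated: $G$ is not C*-simple iff there exist an amenable subgroup $H \leq G$ and a finite subset $S$ with $e \notin S$ such that $g^{-1}Hg \cap S \neq \varnothing$ for every $g \in G$.

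Next I would observe that, writing $H^g = g^{-1}Hg$ for the conjugate, the condition ``$g^{-1}Hg \cap S \neq \varnothing$ for all $g \in G$'' is verbatim the condition ``$S \cap H^g \neq \varnothing$ for all $g \in G$''. By the Definition above, this says exactly that the finite set $S$ is amenably-visible, with witnessing amenable subgroup $H$. Hence the pair $(H,S)$ produced by Kennedy's criterion is precisely a certificate that $S$ is a finite amenably-visible set not containing $e$; conversely, any finite amenably-visible $S$ with $e \notin S$ yields, together with its witnessing amenable subgroup, exactly the data Kennedy's criterion demands. The two statements are therefore logically equivalent, which is all that is claimed.

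The one point that deserves an explicit line is the parenthetical equivalence inside the Definition, namely that ``$S \cap H^\gamma \neq \varnothing$ for all $\gamma$'' and ``$S^\gamma \cap H \neq \varnothing$ for all $\gamma$'' describe the same notion. I would verify this by noting that $s \in H^\gamma = \gamma^{-1}H\gamma$ holds iff $\gamma s \gamma^{-1} = s^{\gamma^{-1}} \in H$, so that $S \cap H^\gamma \neq \varnothing$ iff $S^{\gamma^{-1}} \cap H \neq \varnothing$; since $\gamma$ ranges over all of $G$, the substitution $\gamma \mapsto \gamma^{-1}$ shows the two universally quantified conditions coincide. I do not expect a genuine obstacle here: the substance lies entirely in Kennedy's theorem, which we may assume, and the present statement is bookkeeping that aligns his ``$g^{-1}Hg \cap S$'' formulation with the conjugation-symmetric packaging of the notion of an amenably-visible set. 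The only thing to watch is the direction of the conjugation convention, so that the stated equivalence in the Definition comes out with the correct orientation.
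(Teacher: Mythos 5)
Your proposal is correct and matches the paper exactly: the paper gives no separate proof, simply asserting that the statement is a trivial reformulation of Kennedy's criterion quoted just before it, which is precisely the identification you carry out (including the conjugation bookkeeping for the parenthetical in the definition). Nothing further is needed.
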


The main result of this paper is the following:

\begin{mytheor}\label{thm: main}
Let $G$ be a countable non C*-simple group. There is a symmetric measure $\nu$ of full support $G$ such that  for every amenably-visible subset $S$ of $G$ and for almost every point $\xi$ of the Poisson-Furstenberg boundary, the stabilizer $\stab_G(\xi)$ has non-empty intersection with $S$.
\end{mytheor}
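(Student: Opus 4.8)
The approach goes through the Furstenberg boundary $\partial_F G$. By the theorem of Kalantar--Kennedy \cite{KaKe17}, non-C*-simplicity of $G$ is equivalent to the $G$-action on $\partial_F G$ failing to be topologically free, and the point stabilizers $\stab_G(\eta)$, $\eta \in \partial_F G$, are amenable; by Theorem~\ref{thm: inner criterion} these are exactly the amenable subgroups responsible for the amenably-visible sets. The plan is to build $\nu$ so that the Poisson boundary $(B,\lambda)$ admits a $G$-map $\pi\colon B \to \partial_F G$ with $\stab_G(\xi) = \stab_G(\pi\xi)$ for $\lambda$-a.e.\ $\xi$, and so that the stationary measure $\pi_*\lambda$ charges the non-free locus. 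Two features are essential and must be engineered in: the stabilizers have to \emph{vary} with $\xi$ (a single subgroup fixing almost every $\xi$ would lie in the normal kernel of the boundary action, forcing normality and landing us back in the amenable-radical case we want to avoid), and the varying subgroup must be large enough to meet every amenably-visible set.

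The construction of $\nu$ rests on amenability. Fix $\eta_0 \in \partial_F G$ with $H := \stab_G(\eta_0)$ a non-trivial amenable subgroup. By the Kaimanovich--Vershik and Rosenblatt theorem an amenable group carries a symmetric non-degenerate measure whose Poisson boundary is trivial; let $\mu$ be such a measure on $H$. I will assemble $\nu$ out of $\mu$ together with its translates and a small symmetric fully supported spreading part, tuned so that the $\nu$-random walk $X_n$ wanders among the conjugates $H^{X_n^{-1}}$ while the motion \emph{inside} each conjugate is driven by the Liouville measure $\mu$ and hence produces no boundary of its own. Then the entire boundary is carried by the sequence of visited conjugates, i.e.\ by the image point $\pi\xi = \lim_n X_n\eta_0 \in \partial_F G$, and triviality of the $\mu$-boundary should force the limiting conjugate to be \emph{absorbed into the stabilizer}, $\stab_G(\xi) \supseteq \lim_n X_n H X_n^{-1} = \stab_G(\pi\xi)$. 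Proving this coincidence of the Poisson stabilizer with the Furstenberg stabilizer --- that the amenable fibre directions are genuinely Liouville and contribute nothing to $B$ beyond $\partial_F G$, all while keeping $\nu$ symmetric and of full support --- is the step I expect to be the main obstacle.

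It remains to see that such stabilizers meet every amenably-visible $S$. Observe first that in the space of subgroups (Chabauty topology) the condition $L \cap S \neq \varnothing$, i.e.\ $L \in \bigcup_{s \in S}\{L' : s \in L'\}$, is clopen; so a clopen set of this form that contains every conjugate of a given subgroup contains the whole closure of that conjugacy orbit. Now let $S$ be amenably-visible with witness $H'$, so $\{L : L \cap S \neq \varnothing\}$ contains every conjugate of $H'$, and hence every limit of such conjugates. The point is that each Furstenberg stabilizer $\stab_G(\pi\xi)$ contains a subgroup lying in the closure of the conjugacy orbit of $H'$: the stabilizer uniformly recurrent subgroup of $\partial_F G$ is the maximal amenable one and dominates the amenable $H'$. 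Combining, $S$ meets $\stab_G(\pi\xi) = \stab_G(\xi)$ for almost every $\xi$. Since $G$ is countable there are only countably many finite $S$, so the exceptional null sets amalgamate into one, yielding the quantifier order in the statement. The delicate points, besides the fibre analysis above, are arranging the domination to hold \emph{pointwise} along almost every trajectory and ensuring $\pi_*\lambda$ avoids the free locus, so that the stabilizers are genuinely non-trivial (and, in the trivial-amenable-radical examples of le Boudec, non-normal).
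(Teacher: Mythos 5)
Your proposal routes everything through the Furstenberg boundary, which the paper never uses, but it contains a genuine gap, and it sits exactly where you flag it. An equivariant map $\pi\colon B\to\partial_F G$ only gives the inclusion $\stab_G(\xi)\subseteq\stab_G(\pi\xi)$ (if $g\xi=\xi$ then $g\pi\xi=\pi\xi$); everything you need rests on the \emph{reverse} inclusion, and no construction of $\nu$ achieving it is actually given. The heuristic ``make the fibre directions Liouville and the limiting conjugate gets absorbed into the stabilizer'' does not survive quantification: as the walk $X_n$ moves between conjugates of $H$, the increments that live in $H$ are conjugated by the (unbounded) past of the walk, so asymptotic invariance of $\mu*\nu^{*n}$ under an element $t$ requires the $H$-part of the measure to be almost invariant under $t^{w}$ for \emph{every} group element $w$ that can occur as a prefix --- a single Liouville measure $\mu$ on $H$ gives no control over these conjugates. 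This is precisely the difficulty the paper engineers around: it enumerates \emph{all} amenably-visible sets $R_i$ with witnessing amenable subgroups $H_i$ (not one fixed $H=\stab_G(\eta_0)$), and builds $\nu$ from uniform measures on F\o lner sets $F_i\subseteq H_i$ chosen to be $(R_i^{A_i^i}\cap H_i,1/i)$-invariant, i.e.\ almost invariant under the conjugates of $R_i$ by all words of length $\le i$ in the support built so far. It then proves $\liminf_n\lvert t*\mu*\nu^{*n}-\mu*\nu^{*n}\rvert\le 2(1-1/\lvert S\rvert)\lvert\mu\rvert$ for some $t\in S$ by a pigeonhole over $S$, and contradicts this with a $0$--$2$-law/martingale estimate that would force the $\liminf$ up to $2-\varepsilon$ if a positive-measure set of boundary points had stabilizer disjoint from $S$. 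No boundary map and no identification of stabilizers is ever needed.

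Two further points in your last paragraph are shakier than you present them. First, the domination you invoke concerns the uniformly recurrent subgroups in the Chabauty \emph{orbit closure} of $H'$, not $H'$ itself, and passing from ``$\stab_G(x)$ belongs to the Furstenberg URS for $x$ in a dense $G_\delta$'' to ``for $\pi_*\lambda$-almost every $x$'' is not automatic: the stabilizer map is only semicontinuous, and a fully supported stationary measure may charge the meagre set where $\stab_G(x)$ strictly contains the germ stabilizer. Second, even granting all of that, the conclusion would be about $\stab_G(\pi\xi)$, which by the first paragraph is an \emph{upper} bound for $\stab_G(\xi)$, not a lower one. The clopen-in-Chabauty observation for $\{L: L\cap S\neq\varnothing\}$ is correct and is a nice way to see why limits of conjugates of $H'$ still meet $S$, but as it stands the argument proves nothing about the Poisson stabilizers themselves.
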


Note that this theorem has the Kaimanovich-Vershik and Rosenblatt result as a corollary. Indeed, for for an amenable group $G$ all one-element subsets are amenably visible, so the measure constructed should have the whole group as a stabilizer of almost every point in the boundary.

Le Boudec proved the following in \cite{Bo17}:

\begin{theor}
There are non-C*-simple groups that do not have non-trivial amenable subgroups.
\end{theor}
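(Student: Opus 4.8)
The plan is to realize such groups as Le Boudec's groups $G(F,F')$ of almost prescribed local action on a regular tree, choosing the two finite permutation groups $F \le F' \le \Sym(d)$ to pull in opposite directions: $F$ should act freely (semiregularly) on $\{1,\dots,d\}$, while $F'$ should have non-trivial point stabilizers (take, say, $d\ge 3$, $F=\Z/d\Z$ acting regularly and $F'=\Sym(d)$). Recall that $G(F,F')$ consists of those automorphisms of the $d$-regular tree $T$ whose local permutation at every vertex lies in $F'$ and lies in $F$ at all but finitely many vertices. I would first record the two structural facts that drive everything: (i) $G(F,F')$ acts on $T$ minimally and of general type, with Tits' independence property (P); (ii) because $F$ is semiregular, the stabilizer $G_\xi$ of an end $\xi\in\partial T$ is amenable — it is an extension of a subgroup of $\Z$ (via the Busemann cocycle) by the horospherical fixator, and semiregularity of $F$ is exactly what collapses the latter to a locally finite group. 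Note that "no non-trivial amenable subgroups" must be read, as in the trace theorem quoted above, as "trivial amenable radical" (every non-trivial element generates an amenable cyclic subgroup, so the literal reading holds only for the trivial group); thus the target splits into showing that $G(F,F')$ is not C*-simple and that its amenable radical is trivial.

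For non-C*-simplicity I would produce an amenably-visible set and apply Theorem \ref{thm: inner criterion}. Fix an edge of $T$, splitting it into complementary half-trees $T_+,T_-$ with $\partial T=\partial T_+\sqcup\partial T_-$. Using that $F'$ has a non-trivial point stabilizer, I can build non-trivial $s_\pm\in G(F,F')$ that fix $T_\mp$ pointwise and act with finite support inside $T_\pm$; the exceptional (non-$F$) local action occurs at a single vertex, so $s_\pm$ genuinely lie in $G(F,F')$. Then $s_+$ fixes every end of $T_-$ and $s_-$ fixes every end of $T_+$, so for \emph{every} end $\xi$ the stabilizer $G_\xi$ contains $s_+$ or $s_-$. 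Taking the amenable subgroup $H=G_\xi$ and $S=\{s_+,s_-\}$ (with $e\notin S$), every conjugate $H^\gamma$ is again an end-stabilizer, hence again meets $S$; so $S$ is amenably-visible and avoids the identity, and Theorem \ref{thm: inner criterion} yields that $G(F,F')$ is not C*-simple.

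For the amenable radical I would invoke Tits' simplicity theorem: by (i), the subgroup $G(F,F')^+$ generated by the fixators of edges is simple and, containing independent hyperbolics, is non-amenable (it contains a non-abelian free group by ping-pong). If $R$ is the amenable radical, then $R\cap G(F,F')^+$ is an amenable normal subgroup of the simple non-amenable group $G(F,F')^+$, hence trivial; normality then gives $[R,G(F,F')^+]=1$, and since $G(F,F')$ acts faithfully and minimally on $T$ the centralizer of $G(F,F')^+$ is trivial, forcing $R=1$. Together with the previous paragraph this exhibits a non-C*-simple group with trivial amenable radical.

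The main obstacle is conceptual rather than computational: non-C*-simplicity demands a confined amenable subgroup while triviality of the amenable radical forbids an amenable \emph{normal} one, so the whole construction hinges on the confined subgroup (the end-stabilizer) being decidedly non-normal. Technically, the delicate point is the simultaneous calibration of $F$ and $F'$: semiregularity of $F$ is precisely what makes the end-stabilizers amenable (in $\mathrm{Aut}(T)$ they are not), while a non-trivial point stabilizer in $F'$ is precisely what produces the half-tree-fixing witnesses $s_\pm$. Verifying amenability of $G_\xi$ under semiregularity, and checking property (P) together with general type for $G(F,F')$, are the two lemmas I expect to cost the most work.
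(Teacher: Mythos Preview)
The paper does not supply its own proof of this statement: it simply attributes the result to Le Boudec \cite{Bo17} and moves on. Your sketch is a correct and faithful reconstruction of Le Boudec's original argument (groups $G(F,F')$ of almost prescribed local action with $F$ acting freely and $F'$ having a non-trivial point stabilizer; amenability of end-stabilizers from semiregularity of $F$; a finite set of half-tree fixators witnessing confinement of an end-stabilizer; trivial amenable radical via Tits' simplicity of $G(F,F')^+$). You also correctly flag that the phrase ``non-trivial amenable subgroups'' here must be read as ``non-trivial amenable \emph{normal} subgroups'' (equivalently, trivial amenable radical); this is indeed the intended meaning throughout the paper, as the subsequent corollary confirms.
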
 

For our purposes we are interested in countable groups, so we need the following lemma:

\begin{lem}
If $H$ is a countable subgroup of a group $\Gamma$ and $\gamma$ is a non-C*-simple group without non-trivial amenable subgroup, then there is a subgroup $G$of $\Gamma$ that contains $H$ and such that $G$ is countable.
\end{lem}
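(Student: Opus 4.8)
Reading the statement as the expected countable reduction---with $\Gamma$ a non-C*-simple group of trivial amenable radical and $H\le\Gamma$ countable, producing a countable $G$ with $H\le G\le\Gamma$ that is again non-C*-simple and of trivial amenable radical, so that Theorem~\ref{thm: main} may be applied to $G$---the plan is a L\"owenheim--Skolem-type exhaustion. I would build an increasing chain $G_0\le G_1\le\cdots$ of countable subgroups of $\Gamma$, adjoining at each step countably many elements chosen to force the two desired properties into the limit $G=\bigcup_n G_n$, which is then countable and contains $H$.

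First I would fix the data witnessing non-C*-simplicity: by Theorem~\ref{thm: inner criterion} there is a finite amenably-visible set $S\subseteq\Gamma$ with $e\notin S$, i.e.\ an amenable subgroup $A\le\Gamma$ with $S\cap A^\gamma\neq\varnothing$ for all $\gamma\in\Gamma$. Starting the chain from $G_0=\langle H\cup S\rangle$ keeps $S\subseteq G$, and then non-C*-simplicity of $G$ is essentially free: for $\gamma\in G$ choose $s\in S$ with $\gamma s\gamma^{-1}\in A$; as $s\in S\subseteq G$ and $\gamma\in G$ we have $\gamma s\gamma^{-1}\in A\cap G$, so $s\in S\cap(A\cap G)^\gamma$. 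Thus the same $S$ is amenably-visible in $G$ through the amenable subgroup $A\cap G\le G$, and Theorem~\ref{thm: inner criterion} shows $G$ is not C*-simple---no iteration is needed for this beyond $S\subseteq G$.

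All the genuine work is in forcing the amenable radical of $G$ to be trivial, that is, in arranging that the normal closure $\langle\langle g\rangle\rangle_G$ is non-amenable for every $g\in G\setminus\{e\}$. For a fixed such $g$, the normal closure $\langle\langle g\rangle\rangle_\Gamma$ is a nontrivial normal subgroup of $\Gamma$, hence non-amenable since $\Gamma$ has trivial amenable radical. The main obstacle is exactly that non-amenability is not captured by any finite amount of data, so one cannot simply adjoin a single witness. This is overcome by the local nature of amenability---a group is amenable iff each of its finitely generated subgroups is---which lets me pick a finitely generated non-amenable subgroup $F_g\le\langle\langle g\rangle\rangle_\Gamma$. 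Each generator of $F_g$ is a finite product of conjugates $\delta^{-1}g^{\pm1}\delta$ with $\delta\in\Gamma$, involving only finitely many conjugators $\delta$; at stage $n\to n+1$ I would, for each of the countably many $g\in G_n\setminus\{e\}$, adjoin these finitely many $\delta$ (together with $G_n$) to form $G_{n+1}$. Then $F_g\le\langle\langle g\rangle\rangle_{G_{n+1}}\le\langle\langle g\rangle\rangle_G$, so $\langle\langle g\rangle\rangle_G$ is non-amenable. Since every nonidentity element of $G=\bigcup_n G_n$ lies in some $G_n$ and is handled at the next stage, $G$ has trivial amenable radical; and each $G_{n+1}$, being generated by $G_n$ and countably many new elements, stays countable, so $G$ is countable.
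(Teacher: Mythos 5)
Your proposal is correct and follows essentially the same route as the paper: start from $\langle H\cup S\rangle$ for a finite amenably-visible $S$ avoiding the identity (which already forces non-C*-simplicity of any intermediate subgroup containing $S$), then iteratively adjoin, for each nontrivial element of the current countable stage, the finitely many conjugators needed to generate a non-amenable finitely generated subgroup of its normal closure, and take the countable union. The paper packages these finitely many conjugates as ``witness-sets'' for each element, but the exhaustion argument is the same as yours.
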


\begin{proof}
By theorem \ref{thm: inner criterion}, there is a finite amenably-visible subset $S$ of $\Gamma$ that is disjoint from the identity. Note that by the same theorem, any subgroup of $\Gamma$ that contains $S$ will be non-C*simple. Let $G_1$ be the subgroup of $\Gamma$ generated by $H$ and $S$. Note that since $\Gamma$ has no non-trivial amenable normal subgroups, for any non-trivial element $a \in \Gamma$, the normal subgroup generated by it is non-amenable. This implies that there are finitiely-many elements $t_1, \ldots, t_k$ of $\Gamma$ such that the subgroup generated by $a^{t_1}, \ldots, a^{t_k}$ is non-amenable, we call such finite sets  a witness-set for $a$. Now, we inductively define $G_{i+1}$ by adding a finite witness-set for each non-trivial elements of $G_i$, and considering the generated subgroup of $\Gamma$. Note that $G_{i+1}$ is also countable.  Now we define $G$ to be the union of $G_i$ for $i \geq 1$. It is easy to see that $G$ is countable, has no non-trivial normal amenable subgroups and that $G$ is non-C*-simple as we discussed in the beginning of the proof.
\end{proof}

Le Boudec result shows that the following corollary is not vacuous.

\begin{cor}
Let $G$ be a countable non-C*-simple group without non-trivial amenable subgroups.
There is a symmetric probability measure $\nu$ of full support on $G$ such that the boundary action $G \curvearrowright \partial(G, \nu)$ is not essentially free. Moreover, stabilizers of almost every point of the boundary are not normal subgroups and, in particular, they are not constant along the trajectories. 
\end{cor}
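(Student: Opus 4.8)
The plan is to derive this corollary directly from the Main Theorem (Theorem~B) by specializing to a group $G$ whose only amenable subgroups are trivial. Since $G$ is non-C*-simple, by the reformulated Kennedy criterion (Theorem~\ref{thm: inner criterion}) it admits a finite amenably-visible subset $S$ disjoint from the identity, and by the previous lemma we may arrange $G$ to be countable. Applying Theorem~B to this $G$ produces a symmetric measure $\nu$ of full support such that for almost every boundary point $\xi$, the stabilizer $\stab_G(\xi)$ meets $S$ in a non-identity element. In particular $\stab_G(\xi)$ is non-trivial for almost every $\xi$, so the action is not essentially free; this is the first assertion.

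For the second assertion I would argue that the stabilizers cannot be normal. The key observation is that any normal subgroup of $G$ that is contained in the stabilizer would have to be non-amenable, because $G$ has no non-trivial amenable subgroups (hence a fortiori no non-trivial amenable normal subgroup: the amenable radical is trivial). More precisely, suppose towards a contradiction that for a positive-measure set of $\xi$ the stabilizer $N=\stab_G(\xi)$ is normal in $G$. A normal stabilizer acts trivially on the whole boundary only if it is contained in the stabilizer of almost every point, but more directly I want to exploit the structure coming from Theorem~B: the stabilizer meets $S$, and I would combine this with the fact that in a boundary action the stabilizer is a specific (typically highly non-normal) subgroup to reach a contradiction. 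The cleanest route is to recall the general fact that the kernel of the boundary action is the amenable radical, which here is trivial; a normal stabilizer $N$ of a positive-measure set of points would, by quasi-invariance and the $G$-action permuting stabilizers via conjugation, force $N$ to stabilize almost every point, hence lie in the kernel, hence be trivial, contradicting non-triviality.

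The step I expect to be the main obstacle is pinning down precisely why the stabilizers fail to be normal, rather than merely non-trivial. The transition from ``non-trivial stabilizer'' to ``non-normal stabilizer'' requires an argument that a non-trivial normal subgroup cannot stabilize a positive-measure set. I would establish this via two ingredients: first, that the collection of points with a fixed stabilizer is rearranged by the $G$-action so that $\stab_G(g\xi)=g\,\stab_G(\xi)\,g^{-1}$, meaning a normal stabilizer is constant along orbits; and second, that by ergodicity or by the triviality of the amenable radical, a globally $G$-invariant normal subgroup appearing as a stabilizer on a positive-measure (hence conull, by ergodicity) set would be contained in the kernel of the action. Since the kernel of any boundary action is amenable (indeed it sits inside the amenable radical), and $G$ has trivial amenable radical, such a normal stabilizer must be trivial, contradicting the first part. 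The final clause, that stabilizers are not constant along trajectories, then follows immediately from $\stab_G(g\xi)=g\,\stab_G(\xi)\,g^{-1}$ together with non-normality: if the stabilizer were constant along a trajectory it would be conjugation-invariant under the relevant group elements, and a measurable-rigidity or ergodicity argument upgrades this to normality, which we have just excluded.
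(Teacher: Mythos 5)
Your argument is correct and lands on the same two pillars as the paper (Theorem~\ref{thm: main} for non-freeness, triviality of the amenable radical for non-normality), but the non-normality step takes a longer route than the paper's. The paper's observation is a one-liner: since $\nu$ has full support, the action $G \curvearrowright \partial(G,\nu)$ is amenable in Zimmer's sense, so $\stab_G(\xi)$ is \emph{itself} an amenable subgroup for almost every $\xi$; being non-trivial and amenable, it cannot be normal when the amenable radical is trivial --- no ergodicity and no passage to the kernel are needed. Your chain (normal stabilizer $\Rightarrow$ the set $\lbrace \xi : \stab_G(\xi)=N\rbrace$ is $G$-invariant $\Rightarrow$ conull by ergodicity of the harmonic measure $\Rightarrow$ $N$ lies in the kernel $\Rightarrow$ $N$ amenable $\Rightarrow$ $N$ trivial) is valid, and the constancy-along-trajectories clause follows the same way in both treatments via $\stab_G(g\xi)=g\stab_G(\xi)g^{-1}$; but your route uses strictly more machinery to extract strictly less, since it only sees the amenability of the kernel rather than of the stabilizer itself. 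Two small points: both you and the paper must read the hypothesis as ``no non-trivial \emph{normal} amenable subgroups'' (trivial amenable radical, as in Le Boudec's examples) --- under the literal reading the amenability of the stabilizers would contradict their non-triviality --- and you correctly use only this weaker consequence; and the invocation of the countability lemma is superfluous here, since countability of $G$ is already a hypothesis of the corollary.
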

\begin{proof}
We take the measure $\nu$ constructed in theorem \ref{thm: main}.
Non-freeness is already proved. It remains to note that the action of $G$ on its Poisson boundary is amenable (since $\nu$ has full support) and hence the stabilizers of almost every point are amenable subgroups. There are no non-trivial amenable subgroups in $G$, so these stabilizers are not normal subgroups and they could not be constant along the trajectories.
\end{proof}

We note that most or even all previously known examples of non-free action of groups on their Poisson boundary(for non-degenerate measure) were associated with non-trivial normal amenable subgroup. All possibilities for stabilizers being normal subgroups are described by Erschler and Kaimanovich \cite{ErKa19}, following the idea of \cite{FHTF19}, namely, a normal subgroup could be a stabilizer of the action on the Poisson boundary iff it is amenable, and factor by this group is an ICC (infinite conjugacy class) group, provided that the measure on the group is non-degenerate (its support generates the whole group as semigroup).

The only previous example where stabilizers are not essentially constant (and hence normal subgroups) was given by A. Erschler and V. Kaimanovich (work in progress \cite{ErKa24+}, the result was previously announced on the conferences). They show that the infinite symmetric group has a measure such that the action on the Poisson boundary is totally non-free (a term coined by A. Vershik for actions where the map sending pints to their stabilizer subgroups is essentially one-to-one).

We also point out the following simple combinatorial corollary of the main theorem:

\begin{cor}
Let $G$ be a group. There is an amenable subgroup $H$ in $G$ such that for every finite amenably-visible subset $S$ we have $H^{\gamma}$ has non-empty intersection with $S$ for all $\gamma \in G$.
\end{cor}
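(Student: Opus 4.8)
The plan is to read off the universal amenable subgroup $H$ as the stabilizer $\stab_G(\xi)$ of a single, suitably generic point $\xi$ of the Poisson--Furstenberg boundary furnished by Theorem \ref{thm: main}. First I would dispose of the easy case: if $G$ is C*-simple then, by Theorem \ref{thm: inner criterion}, every finite amenably-visible subset $S$ contains the identity, so the trivial (hence amenable) subgroup $H = \{e\}$ satisfies $H^\gamma \cap S = \{e\} \neq \varnothing$ for every $\gamma$ and every such $S$. Thus I may assume that $G$ is non-C*-simple and, so that the measure exists, countable; then Theorem \ref{thm: main} provides a symmetric measure $\nu$ of full support on $G$, and since $\nu$ has full support the boundary action is amenable, whence $\stab_G(\xi)$ is an amenable subgroup for almost every $\xi$ (exactly as in the proof of the first corollary above).

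Second, I would turn the combinatorial condition into a statement about a single orbit. Writing $H = \stab_G(\xi)$, for any $g \in G$ one has $\stab_G(g\xi) = g\,\stab_G(\xi)\,g^{-1}$, so as $\gamma$ ranges over $G$ the conjugates $H^\gamma$ are precisely the stabilizers of the points of the orbit $G\xi$. Consequently the desired conclusion ``$H^\gamma \cap S \neq \varnothing$ for all $\gamma \in G$'' is equivalent to ``$\stab_G(\eta) \cap S \neq \varnothing$ for every $\eta \in G\xi$.'' Theorem \ref{thm: main} yields exactly this at the level of almost every point for each fixed $S$; the real task is to upgrade ``almost every point'' to ``every point of one fixed orbit,'' simultaneously over all $S$.

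Third, I would run a countable-union-of-null-sets argument, which is where the countability of $G$ enters. For a finite amenably-visible $S$ set $B_S = \{\eta : \stab_G(\eta) \cap S = \varnothing\}$; by Theorem \ref{thm: main} the harmonic measure $m$ satisfies $m(B_S)=0$. The set of $\xi$ whose orbit meets $B_S$ is $\bigcup_{\gamma \in G} \gamma B_S$, and since $m$ is quasi-invariant each translate $\gamma B_S$ is null, so this union is null because $G$ is countable. As $G$ is countable it has only countably many finite subsets, hence only countably many finite amenably-visible $S$, so the union over all such $S$ of these null sets is still null. Intersecting the complement with the conull set on which $\stab_G(\xi)$ is amenable, I pick any $\xi$ in the resulting conull set and put $H = \stab_G(\xi)$; by the reformulation of the second step this $H$ is amenable and satisfies $H^\gamma \cap S \neq \varnothing$ for every finite amenably-visible $S$ and every $\gamma \in G$.

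The point I would flag as the genuine obstacle is that Theorem \ref{thm: main} is available only for countable groups, so the argument above settles all C*-simple groups (of any cardinality) and all countable non-C*-simple groups. For an uncountable non-C*-simple $G$ one cannot feed $G$ itself into Theorem \ref{thm: main}, and merely passing to a countable non-C*-simple subgroup $G_0 \leq G$ (as in the Lemma above) does not obviously suffice: amenable-visibility and the conjugations $H^\gamma$ are taken with respect to all of $G$, not just $G_0$, so the genericity obtained inside $G_0$ need not control the $G$-conjugates of $H$. Hence the delicate step is the reduction of the uncountable case to the countable one; the countable case itself is precisely the short argument above, and throughout amenability of $H$ is guaranteed, not constructed by hand, by the amenability of the boundary action.
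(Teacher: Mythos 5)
Your argument is exactly the paper's (one-sentence) proof, expanded: the paper likewise takes $H$ to be the stabilizer of almost any point of the Poisson boundary, using that there are only countably many finite amenably-visible subsets and hence a conull set of points whose stabilizer meets all of them, with amenability of $H$ coming from amenability of the boundary action. The uncountable case you flag is not treated in the paper either (its countability claim and the use of Theorem \ref{thm: main} implicitly restrict to countable $G$), so your proposal matches the intended argument and is, if anything, more careful about the quasi-invariance/orbit step.
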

\begin{proof}
There are at most countably many finite amenably-visible subsets in $G$, so we may take $H$ to be a stabilizer of almost any point of the Poisson boundary.
\end{proof}

It is worth noting a tangentially related work of Hartman and Kalantar where they characterized C*-simplicity via stationary traces on the reduced group C*-algebra \cite{HK23}. In that work they presented for C*-simple groups an example using and annalog of the Kaimanovich-Vershik construction, of a measure such that the unique stationary trace is the trivial group trace.

{\em Acknowledgements.} I would like to thank Vadim Koimanovich and Romain Tessera for discussions. I'm grateful to Anna Erschler for her comments and suggestions.   

\section{Notation}

For subsets $A, B$ of a group we denote $A^B = \lbrace a ^ b = b^{-1} a b \vert a \in A, b \in B\rbrace$. We say that a finite non-empty set $A$ is $(B, \varepsilon)$-invariant if $\lvert BA \setminus A \rvert < \varepsilon \lvert A\rvert$.
We will introduce necessary preliminaries on Poisson boundaries in section \ref{sec: stab nontriv}.

\section{Construction}\label{sec: construction}
Let $(\alpha_i)_{i \in \N}$ be a probability vector such that for an integer-valued i.i.d. process $(K_i)_{i \in \N}$ with individual distribution given by said vector, $\limsup_{i \to \N} W_i - i  = +\infty$ with probability $1$.
Let $(c_i)_{i \in \N}$ be any enumeration of the elements of $G$. 
Let us fix any enumeration of amenably-visible subsets $R_i$ such that for any amenably-visible subset $S$ and any $M > 0$ with for almost every realization of the i.i.d. process $(K_i)$ there are infinitely many $i$ such that $R_{K_i} = S$ and $K_i - i > C$. One way to construct such enumeration is to put any measure of full support on the collection of all amenably-visible subsets and take $R_i$ to be almost any realization of the resulting i.i.d. process. It will satisfy the requirement due to Fubini theorem. Let also $H_i$ be a sequence of amenable subgroups of $G$ corresponding to amenably-visible subsets $R_i$.

We iteratively define sets $A_n$ and $F_n$. First, let $A_1 = \lbrace 1_G \rbrace$.
\begin{enumerate}
\item $F_i$ is a symmetric $(R_i^{A_i^i} \cap H_i, 1/i)$-invariant subset of $H_i$;
\item $A_{i+1} = A_i \cup F_i \cup \lbrace c_i, c_i^{-1}\rbrace$.
\end{enumerate}

We define $\nu$ to be 
$$\sum_{i \in \N}  \frac{\alpha_i}{3}\big( \delta_{c_i} + \delta_{c^{-1}_i} + \lambda_{F_i}\big)$$.

\begin{lem}
For every $N > 0$, $S$ an amenably-visible set and $\varepsilon > 0$ there is $M$ such that if $m > M$ then $\nu^{*m}$ could be decomposed as
\begin{equation}\label{eq: decomposition}
\nu^{*m} = \eta + \alpha_{q',n,q''}\sum_{q',n,q''} q' \cdot \lambda_{F_n} \cdot q'',
\end{equation}
such that $\alpha_{q',n,q''} >= 0$, $\lvert \eta\rvert < \varepsilon$, $q' \in A_n^{n-1}$, $R_n = S$, $n > N$, and $q'' \in G$.
\end{lem}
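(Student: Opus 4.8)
The plan is to realize $\nu^{*m}$ as the law of a product $g_1 g_2 \cdots g_m$ of $m$ independent $\nu$-distributed increments, where each increment is produced in three stages: sample an index $K_j$ from $(\alpha_i)_{i\in\N}$, then sample a type $\tau_j \in \{+,-,F\}$ uniformly, and output $c_{K_j}$, $c_{K_j}^{-1}$, or a uniform sample $g_j \sim \lambda_{F_{K_j}}$ accordingly; the three pieces of $\nu$ correspond exactly to the three types. I will obtain the decomposition by singling out one distinguished increment and factoring it out. Call a step $i \le m$ \emph{good} if $\tau_i = F$, the index $n := K_i$ satisfies $R_n = S$ and $n > N$, the step is a strict record of $(K_j)$ (that is, $K_l < n$ for all $l < i$), and moreover $K_i > i$. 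The point of this definition is that whether step $i$ is good depends only on $g_1, \dots, g_{i-1}$ and on the pair $(K_i,\tau_i)$, never on the value $g_i$ itself.

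The first key step is to check that if $i$ is good then its prefix $q' := g_1 \cdots g_{i-1}$ lies in $A_n^{n-1}$. By construction $A_n = \{1_G\} \cup \bigcup_{k<n}\big(F_k \cup \{c_k, c_k^{-1}\}\big)$, so any increment $g_l$ whose index satisfies $K_l < n$ lies in $A_n$, whatever its type. Since $i$ is a record we have $K_l < n$ for all $l < i$, hence $q' \in A_n^{i-1}$; and since $1_G \in A_n$ and $i \le n$ (because $K_i > i$), we get $A_n^{i-1} \subseteq A_n^{n-1}$, so indeed $q' \in A_n^{n-1}$.

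Next I would build the decomposition by conditioning on the \emph{first} good step. Let $\eta$ be the restriction of $\nu^{*m}$ to the event that no step in $1, \dots, m$ is good. On the complementary event, record the first good step $i$, its prefix $q' = g_1\cdots g_{i-1}$, its index $n = K_i$, and its suffix $q'' = g_{i+1}\cdots g_m$, so that $g_1\cdots g_m = q' \, g_i \, q''$. The event ``the first good step is $i$, with prefix $q'$, index $n$ and suffix $q''$'' is measurable with respect to all the data except the value $g_i$; conditionally on $\{K_i = n,\ \tau_i = F\}$ that value is uniform on $F_n$ and independent of everything else, so the conditional law of $g_1\cdots g_m$ on this event is exactly $q' \lambda_{F_n} q''$. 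Summing the corresponding probabilities (also over $i$, merging equal triples) as the coefficients $\alpha_{q',n,q''} \ge 0$ gives \eqref{eq: decomposition}; the constraints $q' \in A_n^{n-1}$, $R_n = S$ and $n > N$ are built into the definition of a good step, while $q''$ ranges freely over $G$.

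It remains to bound $\lvert \eta \rvert$, the probability that no good step occurs in $1,\dots,m$, and this is where the main obstacle lies. I would argue that almost surely infinitely many good steps occur, whence the first good step is almost surely finite and the probability that it exceeds $m$ tends to $0$ as $m \to \infty$, yielding the required $M$. The delicate ingredient is producing infinitely many records carrying a large gap: starting from $\limsup_{i}(K_i - i) = +\infty$, one shows that for every $C$ there are infinitely many record times $i$ with $K_i - i > C$ --- for if only finitely many records had gap exceeding $C$, then beyond the last of them the running maximum would force $K_i - i \le C$ for all large $i$, contradicting the $\limsup$. Along these infinitely many records the values $K_i$ are pairwise distinct, so, using that the enumeration $(R_i)$ is an independent i.i.d.\ sequence of full support, infinitely many of them also satisfy $R_{K_i} = S$, and independently infinitely many of those have $\tau_i = F$ by the second Borel--Cantelli lemma. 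Choosing $C \ge N$ guarantees $n = K_i > N$ and $K_i > i$, so all the good-step conditions are met infinitely often, which completes the argument.
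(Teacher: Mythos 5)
Your proof follows the same route as the paper's: couple each increment with an index $K_i$ and a three-way type, locate a ``blue'' record step $i$ with $K_i>i$, $K_i>N$ and $R_{K_i}=S$, and condition on the first such step to extract the factor $q'\cdot\lambda_{F_n}\cdot q''$ with $q'\in A_n^{n-1}$. It is correct, and in fact supplies the details (the record/limsup argument and the Borel--Cantelli steps) that the paper compresses into ``it is easy to observe''.
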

\begin{proof}
This follows rather easily from the construction. Indeed, consider the process $(K_i)$ and an independent i.i.d. process $(Y_i)$ that takes value $\lbrace \text{``blue''}, \text{``red''}, \text{``green''}\rbrace$ with probabilities $(1/3, 1/3, 1/3)$. It is easy to observe that with probability $1$ there is $l > N$ such that $K_l > l+1$, $K_l > K_j$ for $j < l$, $R_{R_l} = S$ and $Y_l = ``blue''$. This implies that for big enough $M$ such $l$ with $l < M$ exists with probability bigger than $1 - \varepsilon$. Now we can couple the process $(K_i, Y_i)$ with the i.i.d. process $(X_i)$ where each $X_i$ has distribution $\nu$ individually for each $i$ if the following way. If $Y_i = \text{``blue''} $ we take $X_i$ unifromly from $F_{K_i}$. If $Y_i = \text{``red''}$, we set $X_i = c_{K_i}$, and if $Y_i = \text{``red''}$, we set $X_i = c^{-1}_{K_i}$. Now the decomposition follows naturally from the existence of this coupling.
\end{proof}

\begin{lem}\label{lem: asymptotic non-disjointness}
For any finite positive measure $\mu$ on $G$ and any amenably-visible subset $S$ of $G$ we have:
$$\liminf_{n \to \infty} \lvert t * \mu * \nu^{*n} - \mu * \nu^{*n} \rvert \leq 2 \Big(1 - \frac{1}{\lvert S\rvert}\Big)\lvert \mu \rvert,$$

for some $t \in S$. 
\end{lem}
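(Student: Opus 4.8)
The plan is to feed the decomposition of the preceding lemma into the difference and then run an averaging argument over the finitely many elements $t\in S$. Throughout $|\cdot|$ is the total-variation ($\ell^1$) norm, so convolution by a probability measure is a contraction and $|\delta_t*\rho-\rho|\le 2|\rho|$ for every finite positive measure $\rho$; recall also that $\nu$ is a probability measure.

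First I would strip off the error and the outer translations. Fixing $N$ and $\varepsilon$ and choosing $n$ large, write $\nu^{*n}=\eta+\sum_{q',k,q''}\alpha_{q',k,q''}\,q'\lambda_{F_k}q''$ as in (\ref{eq: decomposition}), with $|\eta|<\varepsilon$, $R_k=S$, $k>N$, and $q'\in A_k^{k-1}$. Since $t*\mu*\nu^{*n}-\mu*\nu^{*n}=(\delta_t*\mu-\mu)*\nu^{*n}$, distributing over the decomposition, bounding the $\eta$-term by $2|\mu|\varepsilon$, and discarding the right translations $q''$ (which preserve the $\ell^1$ norm), it suffices to control each structured piece $|(\delta_t*\mu-\mu)*\delta_{q'}*\lambda_{F_k}|$.

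The key step is to transport the far-left translation $\delta_t$ to the center, just in front of $\lambda_{F_k}$, where the near-invariance of $F_k$ can act. Writing $\mu=\sum_g\mu(g)\delta_g$ and using $\delta_t\delta_g\delta_{q'}=\delta_{gq'}\delta_{t^{gq'}}$, one gets
\[(\delta_t*\mu-\mu)*\delta_{q'}*\lambda_{F_k}=\sum_g\mu(g)\,\delta_{gq'}\big(\delta_{t^{gq'}}\lambda_{F_k}-\lambda_{F_k}\big),\]
hence the piece is at most $\sum_g\mu(g)\,|\delta_{t^{gq'}}\lambda_{F_k}-\lambda_{F_k}|$. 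For $k$ large we have $g\in A_k$ (as $A_k\nearrow G$), so $gq'\in A_k\cdot A_k^{k-1}=A_k^k$ and therefore $t^{gq'}\in R_k^{A_k^k}$ for every $t\in S=R_k$; the finite-mass tail $\mu(G\setminus A_k)$ is handled by the trivial bound and is $o_N(1)$. Thus, as soon as the conjugate $t^{gq'}$ also lies in $H_k$, the $(R_k^{A_k^k}\cap H_k,1/k)$-invariance of $F_k$ gives $|\delta_{t^{gq'}}\lambda_{F_k}-\lambda_{F_k}|<2/k$, and otherwise I keep the trivial bound $2$.

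Finally I would resolve the single-$t$ constraint by averaging. By amenable visibility of $S$ with witness $H_k$, for each $g$ there is at least one $t\in S$ with $t^{gq'}\in H_k$; so averaging the per-piece bound over $t$ uniform in $S$ leaves at most $|S|-1$ ``bad'' values of $t$ per $g$, contributing $\le 2(1-1/|S|)$, while the good values contribute $O(1/N)$. Summing the pieces (total weight $\le 1$) and collecting the $\eta$-, invariance-, and tail-errors yields $\expect_{t}\,|t*\mu*\nu^{*n}-\mu*\nu^{*n}|\le 2(1-1/|S|)|\mu|+o_N(1)$ for all large $n$; the minimum over $t$ is then at most this average. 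Since $S$ is finite, one element $t^*$ attains it for infinitely many $n$, and letting $\varepsilon\to0$, $N\to\infty$ along these $n$ gives the asserted $\liminf$ bound. The main obstacle is exactly this mismatch: the translation $\delta_t$ begins to the left of the arbitrary measure $\mu$, so moving it into the range of the near-invariance of $F_k$ forces $t^{gq'}$ simultaneously into $R_k^{A_k^k}$ (secured by the product bookkeeping $gq'\in A_k^k$) and into $H_k$ (secured by amenable visibility), and the fact that the admissible $t$ depends on $g$ while only one $t$ may be chosen is precisely what produces the factor $1-1/|S|$.
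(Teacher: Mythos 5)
Your proposal is correct and follows essentially the same route as the paper's proof: feed the decomposition of $\nu^{*m}$ from the preceding lemma into the difference, conjugate $t$ past $gq'$ so that $t^{gq'}\in R_k^{A_k^k}$ lands in front of $\lambda_{F_k}$, invoke the $(R_k^{A_k^k}\cap H_k,1/k)$-invariance together with amenable visibility to find a good conjugate, and extract a single $t\in S$ by averaging/pigeonhole. Your write-up is in fact slightly more careful than the paper's on two points the paper leaves implicit (the tail of a not-necessarily-finitely-supported $\mu$, and fixing one $t^*$ along a subsequence of $n$), but these are routine and do not constitute a different argument.
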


\begin{proof}
We may assume that $\mu$ is finitely-supported. Let $W$ be its support. Take any $\varepsilon > 0$. Take $N$ such that $1/N < \varepsilon$ and $W$ is a subset of $A_N$.
Apply the previous lemma. We get an $M$ such for every $m > M$ a decomposition 
\[
\nu^{*m} = \eta + \alpha_{q',n,q''}\sum_{q',n,q''} q' \cdot \lambda_{F_n} \cdot q'',
\]
such that $\alpha_{q',n,q''} >= 0$, $\lvert \eta\rvert < \varepsilon$, $q' \in A_n^{n-1}$, $n > N$, and $q'' \in G$.

Now observe that $t \cdot wq' \cdot \lambda_{F_n} \cdot q'' = wq' \cdot t^{wq'} \cdot \lambda_{F_n} \cdot q''$,  for $w \in W$. Note that $wq' \in A^n_n$ (since $w \in A_n$ and $q' \in A_n^{n-1}$), so we get 
$$\lvert t \cdot wq' \cdot \lambda_{F_n} \cdot q'' - wq' \cdot \lambda_{F_n} \cdot q'' \rvert < 2\varepsilon,$$
as soon as $t^{wq'} \in H_n$. There exists such $t \in R_n = S$ by definition of the amenably-visible subset $R_n = S$ and since $H_i$ is an amenable subgroup that witnesses the amenable visibility. Now, by the pigeonhole principle, there is such $t \in R_n = S$ that 
$$\lvert t * \mu * \nu^{*n} - \mu * \nu^{*n} \rvert \leq 2 \Big(1 - \frac{1}{\lvert S\rvert}\Big)\lvert \mu \rvert + 3\varepsilon.$$
We now obtain the desired since $S$ is finite and the choice of $\varepsilon > 0$ was arbitrary.
\end{proof}

\section{Stabilizers are non-trivial}\label{sec: stab nontriv}
Let $G$ be a countable group and $\nu$ a measure on $G$. We say that measure is non-degenerate if its support generate $G$ as a semigroup.
Let $(X_i)_{i \in \N}$ be a $G$-valued i.i.d. process where each $X_i$ has distribution $\nu$. Let $(Y_i)_{i \in \N}$ be a process defined by $Y_i = Y_1 \cdot \ldots \cdot Y_i$. We endow the product $\Omega = \prod_{i = 1}^{\infty} G_i$, where $G_i$'s are copies of $G$, with the measure $\eta$ of path distribution of the $\nu$-random walk. In particular, $\pr_{G_i} \eta = \nu^{*i}$. There is an action of $G$ on $\Omega$: 
$$ \gamma \cdot (g_1, g_2, \ldots ) = (\gamma g_1, \gamma g_2, \ldots).$$
Let $\sA_i$ for $i \in \N$ denote the subalgebra of measurable subsets generated by the $G_i$ component of $\Omega$ (or equivalently, by $Y_i$). Let $\sA_{[i,\infty)}$ denote the join (the minimal subalgebra generated by) of $\sA_i, \sA_{i+1}, \ldots$.
The intersection $\sA = \bigcap_{i \in \N} \sA_{[i,\infty)}$ is the tail subalgebra of the random walk. Note that there is a unique up to isomorphism space $\partial(G, \nu)$ together with a natural map $\pr_{\partial} : \Omega \to \partial(G, \nu)$ such that $\sA_{\infty}$ is essentially the preimage of the natural Borel algebra on $\partial(G, \nu)$ under $\pr_{\partial}$. We note, that there is an induced quasi-invariant action of $G$ on $\partial(G, \nu)$ such that the map $\pr_{\partial} : \Omega \to \partial(G, \nu)$ is equivariant. 
In general, the Poisson boundary is defined as the space of ergodic components under the shift-action, but in case of non-degenerate measure, it coincides with the tail boundary we defined earlier (see \cite{Ka92}).

The next lemma will show us that measure $\nu$ constructed in the previous section is such that the stabilizer of almost every point of the Poisson boundary has non-empty intersection with every amenably-visible set 

\begin{lem}\label{lem: asymptotic disjointness}
Let $\nu$ be a measure of full support on $G$ such that 
If for a finite subset $S \subset$ of $G$ there is a positive-measure subset of points of the Poisson boundary $\partial(G,\nu)$ whose stabilizers are disjoint with $S$. Then for every $\varepsilon > 0$ there is positive finite measure $\mu$ on $G$ such that 
\begin{equation*}
\liminf_{n \to \infty}\lvert t * \mu * \nu^{*n} - \mu * \nu^{n} \rvert \geq (2 - \varepsilon) \lvert  \mu\rvert
\end{equation*}
\end{lem}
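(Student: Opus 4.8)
The plan is to transport the entire problem to the boundary $\partial(G,\nu)$ and to exploit that convolution with $\nu^{*n}$ leaves the pairing against bounded harmonic functions unchanged. Throughout let $m$ denote the harmonic (stationary) measure on $\partial(G,\nu)$, and for a finite positive measure $\theta$ on $G$ write $m_\theta = \theta * m$ for the induced measure on the boundary, so that $m_{t\theta} = t_* m_\theta$ and $\lvert m_\theta\rvert = \lvert\theta\rvert$. I read the conclusion as asserting the displayed inequality for \emph{every} $t \in S$ simultaneously, since that is what is needed to contradict Lemma~\ref{lem: asymptotic non-disjointness}.

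First I would record the boundary reduction. Bounded $\nu$-harmonic functions $h$ on $G$ are exactly the Poisson transforms $h(g) = \int_\partial \hat h(g\xi)\,dm(\xi)$ of functions $\hat h \in L^\infty(\partial,m)$, with $\lVert h\rVert_\infty \leq \lVert \hat h\rVert_\infty$; moreover $\langle \theta * \nu^{*n}, h\rangle = \langle \theta, h\rangle$ for all $n$ by harmonicity, and $\langle \theta, h\rangle = \int_\partial \hat h\,dm_\theta$. Hence for any harmonic $h$ with $\lVert h\rVert_\infty \leq 1$ and any $n$, $\lvert t * \mu * \nu^{*n} - \mu * \nu^{*n}\rvert \geq \langle t\mu - \mu, h\rangle = \int_\partial \hat h\,d(t_* m_\mu - m_\mu)$. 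Taking the supremum over such $h$ (equivalently over $\lVert \hat h\rVert_\infty \leq 1$) gives, uniformly in $n$ and hence in the $\liminf$, that $\liminf_n \lvert t * \mu * \nu^{*n} - \mu * \nu^{*n}\rvert \geq \lvert t_* m_\mu - m_\mu\rvert$. It therefore suffices to build a finite $\mu$ with $\lvert t_* m_\mu - m_\mu\rvert \geq (2-\varepsilon)\lvert\mu\rvert$ for all $t \in S$.

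Next I would disjointify and then concentrate. By hypothesis there is $B \subseteq \partial$ with $m(B) > 0$ and $t\xi \neq \xi$ for $m$-a.e.\ $\xi \in B$ and every $t \in S$. A routine measure-theoretic argument (separate $\xi$ from $t\xi$ by a set drawn from a countable separating family, and iterate over the finitely many $t \in S$, each step only shrinking $B$) yields $B' \subseteq B$ with $m(B') > 0$ and $tB' \cap B' = \varnothing$ for all $t \in S$; note $tB' \cap B' = \varnothing$ is equivalent to $t^{-1}B' \cap B' = \varnothing$. To concentrate the harmonic measure on $B'$, observe that $h_{B'}(g) = m(g^{-1}B') = \int_\partial 1_{B'}(g\xi)\,dm(\xi)$ is bounded harmonic, and along the walk $h_{B'}(Y_n) = \expect[\,1_{B'}(\xi_\infty) \mid X_1,\dots,X_n]$ is a bounded martingale converging a.s.\ to $1_{B'}(\xi_\infty)$, where $\xi_\infty = \lim Y_n$ has law $m$. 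Since $m(B') > 0$, on the positive-probability event $\{\xi_\infty \in B'\}$ we get $h_{B'}(Y_n) \to 1$, so for any prescribed $\delta > 0$ there is $g \in G$ with $g_* m(B') = m(g^{-1}B') \geq 1 - \delta$. I would then take $\mu = \delta_g$, so $m_\mu = g_* m$ and $\lvert\mu\rvert = 1$.

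Finally I would conclude. Fix $t \in S$. Since $tB' \cap B' = \varnothing$ and $m_\mu(B') \geq 1 - \delta$, we have $m_\mu(tB') \leq \delta$, $t_* m_\mu(tB') = m_\mu(B') \geq 1 - \delta$, and $t_* m_\mu(B') = m_\mu(t^{-1}B') \leq \delta$; testing $t_* m_\mu - m_\mu$ against $\hat h = 1_{tB'} - 1_{B'}$ (admissible, $\lVert \hat h\rVert_\infty \leq 1$) gives $\lvert t_* m_\mu - m_\mu\rvert \geq 2(1 - 2\delta)$. Choosing $\delta \leq \varepsilon/4$ yields the required bound for every $t \in S$ with the single measure $\mu = \delta_g$, precisely because $B'$ was chosen $S$-separated. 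The main obstacle I anticipate is not a single estimate but the clean packaging of the two reductions: that only harmonic test functions survive convolution by $\nu^{*n}$, so the $\liminf$ is pinned to the fixed boundary quantity $\lvert t_* m_\mu - m_\mu\rvert$; and the martingale-convergence step showing that harmonic measure can be translated to sit on $B'$ up to mass $\delta$, which is what converts near-singularity on the boundary into a genuine lower bound on $G$. The disjointification is standard but must be carried out uniformly over the finite set $S$, which is why I arrange it as a finite nested refinement.
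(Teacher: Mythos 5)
Your argument is correct, and it reaches the same conclusion as the paper by a noticeably more self-contained route. The two proofs share their first step verbatim: both shrink the hypothesized positive-measure set to a $B'$ (the paper's $Q'$) with $tB' \cap B' = \varnothing$ for all $t \in S$, and both ultimately rest on martingale convergence of the Poisson transform $h_{B'}(Y_n) \to 1_{B'}(\xi_\infty)$. They diverge in the choice of $\mu$: the paper restricts the path measure $\eta$ to the paths converging into $Q'$ and takes $\mu_i = \pr_{G_i}\eta'$, i.e.\ the time-$i$ marginal of the conditioned walk, then invokes "the same argument as the $0$--$2$ law" of Kaimanovich to get the lower bound for large $i$; you instead first pin the $\liminf$ from below by the fixed boundary quantity $\lvert t_* m_\mu - m_\mu\rvert$ (by testing against a single bounded harmonic function, which is invariant under convolution by $\nu^{*n}$), and then take $\mu = \delta_g$ for one group element $g$ at which $g_* m$ puts mass $\geq 1-\delta$ on $B'$ --- such a $g$ exists precisely by the martingale convergence the paper also uses. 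What your version buys is that every step is explicit rather than delegated to \cite{Ka92}, the measure $\mu$ is as simple as possible (a point mass), and you make explicit the quantifier that the lemma's statement garbles but that the application requires, namely that the bound holds for \emph{all} $t \in S$ with a single $\mu$ (your $B'$ is $S$-separated, so one $g$ serves all $t$ simultaneously, which is exactly what is needed to contradict Lemma~\ref{lem: asymptotic non-disjointness}, where the competing upper bound holds only for \emph{some} $t \in S$). The only points worth flagging, both harmless, are that the duality between $L^\infty(\partial,m)$ and bounded harmonic functions, and the absolute continuity $g_*m \ll m$, use non-degeneracy of $\nu$, which holds here since $\nu$ has full support; and that your disjointification needs a countable separating family on $\partial(G,\nu)$, which is available since the boundary is a standard measure space.
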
 
\begin{proof}
We first note that there is a positive-measure subset $Q'$ of $\partial(G, \nu)$ such that $t \cdot Q'$ does not intersect $Q'$ for all $t \in S$. This corresponds to s subset $Q$ of $\Omega$. Let $\eta'$ be the restriction of $\eta$ to $Q$.
Now, using the same type of argument as in the proof of 0-2 law in \cite{Ka92}, we get that a projection $\mu_i = \pr_{G_i} \eta$ gives the desired for big enough $i$. This follows from the martingale convergence theorem similarly to the proof of the 0-2 law in \cite{Ka92}.

\end{proof}

\begin{proof}[Proof of theorem \ref{thm: main}]
We take measure $\nu$ on group $G$ from section \ref{sec: construction}. Combination of lemmata \ref{lem: asymptotic disjointness} and \ref{lem: asymptotic non-disjointness} implies that for every finite amenably-visible subset $S$ of the group, stabilizers of the $G$ action on the Poisson boundary $\partial(G, \nu)$ have non-empty intersection with $S$.
\end{proof}

\end{document}